\documentclass[12pt,reqno]{amsart}

\usepackage{wrapfig}
\usepackage{tabu}
\usepackage{longtable}
\usepackage{amsmath,amsthm,amssymb,comment,fullpage}
\usepackage{subcaption}
\usepackage{times}
\usepackage[T1]{fontenc}
\usepackage{mathrsfs}
\usepackage{latexsym}
\usepackage[dvips]{graphics}
\usepackage{epsfig}
\usepackage{floatflt}
\usepackage{amsmath,amsfonts,amsthm,amssymb,amscd}
\input amssym.def
\input amssym.tex
\usepackage{color}
\usepackage{graphicx}

\usepackage{hyperref}
\usepackage{url}
\usepackage{breakurl}
\usepackage{comment}
\usepackage{pseudocode}
\usepackage{fancybox}
\newcommand{\bburl}[1]{\textcolor{blue}{\url{#1}}}



\numberwithin{equation}{section}

\newtheorem{thm}{Theorem}[section]

\newtheorem{cor}[thm]{Corollary}

\newtheorem{defi}[thm]{Definition}

\theoremstyle{plain}

\newtheorem{lemma}[thm]{Lemma}

\newtheorem{theorem}[thm]{Theorem}

\newtheorem{rem}[thm]{Remark}
\newtheorem{remark}[thm]{Remark}


\newcommand\be{\begin{equation}}
\newcommand\ee{\end{equation}}
\newcommand\bea{\begin{eqnarray}}
\newcommand\eea{\end{eqnarray}}
\newcommand\bi{\begin{itemize}}
\newcommand\ei{\end{itemize}}
\newcommand\ben{\begin{enumerate}}
\newcommand\een{\end{enumerate}}
\newcommand\bc{\begin{center}}
\newcommand\ec{\end{center}}
\newcommand\ba{\begin{array}}
\newcommand\ea{\end{array}}



\usepackage{pgf,tikz}
\usepackage{mathrsfs}
\usetikzlibrary{arrows}

\definecolor{ffqqqq}{rgb}{1.,0.,0.}
\definecolor{qqffqq}{rgb}{0.,1.,0.}
\definecolor{qqqqff}{rgb}{0.,0.,1.}
\definecolor{cqcqcq}{rgb}{0.7529411764705882,0.7529411764705882,0.7529411764705882}

\newcommand{\R}{\ensuremath{\mathbb{R}}}

\newcommand{\N}{\mathbb{N}}

\usepackage{multicol}



\newcommand{\hr}[1]{\href{#1}{\url{#1}}}



\title{Classification of Crescent Configurations}

\author{Rebecca F. Durst}
\email{\textcolor{blue}{\href{mailto:rfd1@williams.edu}{rfd1@williams.edu}}}
\address{Department of Mathematics and Statistics, Williams College, Williamstown, MA 01267}

\author{Max Hlavacek}
\email{\textcolor{blue}{\href{mailto:mhlavacek@g.hmc.edu}{mhlavacek@g.hmc.edu}}}
\address{Department of Mathematics, Harvey Mudd College}

\author{Chi Huynh}
\email{\textcolor{blue}{\href{mailto:huynhngocyenchi@gmail.com,nhuynh30@gatech.edu}{huynhngocyenchi@gmail.com,nhuynh30@gatech.edu}}}
\address{School of Mathematics, Georgia Institute of Technology, Atlanta, GA 30332}

\author{Steven J. Miller}
\email{\textcolor{blue}{\href{mailto:sjm1@williams.edu, Steven.Miller.MC.96@aya.yale.edu}{sjm1@williams.edu, Steven.Miller.MC.96@aya.yale.edu}}}
\address{Department of Mathematics and Statistics, Williams College, Williamstown, MA 01267}

\author{Eyvindur A. Palsson}
\email{\textcolor{blue}{\href{mailto:palsson@vt.edu} {palsson@vt.edu}}}
\address{Department of Mathematics, Virginia Tech University, Blacksburg, VA 24061} 

\thanks{This work was supported financially by Williams College Finnerty Fund, Simons Foundation Grant \#360560, NSF Grants DMS1265673, DMS1561945 and NSF Grant DMS1347804. We want to thank Williams College and SMALL REU for continuous support, Kevin Kwan for helpful conversations on geometrical properties of crescent configurations, Ileena Streinu for insightful advice on rigidity 
, and Ferenc Szollosi for directing us towards Liu's work and pointing out the need to define strict graph isomorphism classes. We also want to thank the referee for detailed advice}

\subjclass[2010]{52C10 (primary) 52C35 (secondary)}

\keywords{}

\date{\today}

\begin{document}

\begin{abstract} 
Let $n$ points be in crescent configuration in $\mathbb{R}^d$ if they lie in general position in $\mathbb{R}^d$ and determine $n-1$ distinct distances, such that for every $1 \leq i \leq n-1$ there is a distance that occurs exactly $i$ times. Since Erd\H{o}s' conjecture in 1989 on the existence of $N$ sufficiently large such that no crescent configurations exist on $N$ or more points, he, Pomerance, and Pal\'asti have given constructions for $n$ up to $8$, but nothing is yet known for $n \geq 9$. Most recently, Burt et. al. \cite{SM15} had proven that a crescent configuration on $n$ points exists in $\mathbb{R}^{n-2}$ for $n \geq 3$. In this paper, we study the classification of configurations on $4$ and $5$ points using distance classes. Our techniques, together with A. Liu's 1986 result \cite{Liu}, give all distance classes with realizable configurations 
on $4$ points, and a lower bound on the number of configurations on $5$ points. Furthermore, since they can be generalized to higher dimensions, our techniques offer a new viewpoint on the problem through the lens of distance geometry, and provide a systematic way to construct crescent configurations.\\
\end{abstract}

\maketitle
\tableofcontents

\section{Introduction}\label{intro}
Erd\H{o}s once wrote, ``my most striking contribution to geometry is, no doubt, my problem on the number of distinct distances,'' \cite{Erd96}. The referred question, which asks what is the minimum number of distinct distances determined by $n$ points, was first asked in 1946 \cite{Erd46} and marked the beginning of a chain of variants. See \cite{She} and \cite{SM15} for a survey on these. Although one would expect all distances between $n$ points to be different if they were to be placed in the plane at random, if the distances are regularly placed, such as on a lattice, then many distances may repeat. Erd\H{o}s' conjectured lower bound, $\Omega(n/\sqrt{\log{n}})$, attained by a $\sqrt{n}\times \sqrt{n}$ integer lattice, was essentially proven up to a $\sqrt{\log{n}}$ factor by Guth and Katz in 2010. \cite{GK}

The variant we study in this paper is one where the distances have prescribed multiplicities. One says $n$ points are in crescent configuration in $\mathbb{R}^d$ if they are in general position and determine $n-1$ distinct distances such that for every $1 \leq i \leq n-1$, there is a distance that occurs exactly $i$ times. Erd\H{o}s conjectured that there exists a sufficiently large $N$ such that no crescent configuration exists on $N$ or more points \cite{Erd89}. Though constructions have been provided for $n=5,6,7,8$ by Erd\H{o}s, I. Pal\'asti and C. Pomerance \cite{Pal87, Pal89, Erd89}, little progress has been made towards a construction for $n \geq 9$. One problem often encountered in the search for these configurations is the lack of understanding of their properties, and the difficulty in exhibiting the configurations' information combinatorially. 

Later, A. Liu published a manuscript in which he lists known crescent configurations, and begins classifying crescent configurations (which he refers to as complete systems on $n$ points) based on their labeled graph structures \cite{Liu}.  In particular, he identified 3 combinatorial types of crescent configurations on 4 points.

As such, we continue this approach to studying these crescent configurations, working to formalize Liu's idea of classifying these configurations using combinatorial data.  Along the way, we borrow techniques from distance geometry and graph theory. In  our methods, 
we employ a concept of \textit{strict distance classes}, which are distance classes  of crescent configurations which are more likely to obey  general position. Our main theorems are the results of two algorithms that search for and classify crescent configurations on any $n \geq 4$ up to strict distance class, and find geometric realizations for one member of each of these distance classes in the plane.
\\
\begin{theorem} \label{thm:3on4}
Given a set of three distinct distances $\{d_1,d_2,d_3\}$ on four points,  there are only three strict distance classes with allowable crescent configurations. 
In Figure \ref{fig:mcr} we provide graph realizations for each type.
\end{theorem}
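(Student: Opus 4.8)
The plan is to split the argument into a combinatorial enumeration followed by a geometric realizability check. I would first encode a crescent configuration on four points $\{p_1,p_2,p_3,p_4\}$ as an edge-coloring of $K_4$: color the edge $\{i,j\}$ by whichever of $d_1,d_2,d_3$ equals $|p_i-p_j|$. Since $\binom{4}{2}=6=1+2+3$ and the crescent condition requires one distance of multiplicity exactly $1$, one of multiplicity exactly $2$, and one of multiplicity exactly $3$, the three color classes are forced to have sizes $1,2,3$. Because these sizes are distinct, each class is intrinsically identified by its multiplicity, so an isomorphism of colored configurations is exactly a vertex relabeling, i.e.\ an element of $S_4$. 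Thus the combinatorial content of the theorem is the count of $S_4$-orbits of partitions of $E(K_4)$ into parts of sizes $3,2,1$.

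Next I would carry out this enumeration, the cleanest invariant being the shape of the size-$3$ class, i.e.\ the subgraph formed by the distance occurring three times. A three-edge subgraph of $K_4$ is, up to isomorphism, a triangle, a star $K_{1,3}$, or a path $P_4$; these three shapes organize the candidates, and one then checks how the remaining three edges (carrying $d_2$ twice and $d_1$ once) may be distributed in each case. A direct hand count, confirmable by a one-line Burnside computation over the conjugacy classes of $S_4$, pins down the finite list of candidate colorings, each of which I would record explicitly on the vertex set $\{1,2,3,4\}$ together with a representative realization.

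Finally, for each candidate I would produce explicit planar coordinates meeting the standing requirements of a crescent configuration: general position (no three collinear, no four concyclic) and $d_1,d_2,d_3$ genuinely distinct. The triangle type is realized on an equilateral triangle with the fourth point placed generically on a perpendicular bisector, and the star type by placing three points on a circle centered at the fourth; choosing the one remaining free length generically avoids equal distances and the degenerate loci. I expect the main obstacle to be the path type, where the three equal consecutive edges leave almost no freedom and the natural, symmetric placement of the doubled distance forces the quadrilateral to be an isosceles trapezoid---hence concyclic, violating general position. The crux is therefore to analyze this case rigorously: fixing coordinates adapted to the three equal edges, I would solve the remaining two distance equations and examine every solution branch (equivalently, test the Cayley--Menger determinant), in order to decide precisely which path-colorings admit a genuinely general-position realization and which are forced onto a common circle. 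Carrying out this degeneracy analysis---determining which candidate colorings survive in general position, and thereby isolating the three realizations depicted in Figure \ref{fig:mcr}---is where the real work lies.
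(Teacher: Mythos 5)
Your proposal is correct and would prove the theorem, and its skeleton---combinatorial classification up to isomorphism followed by a general-position/realizability filter---matches the paper's; but you execute both stages quite differently, and the comparison is instructive. The paper is algorithmic throughout: it threads all $60$ permutations of $\{d_1,d_2,d_2,d_3,d_3,d_3\}$ through symmetric distance matrices, groups them into $4$ isomorphism classes by matching distance sets (justified by Theorem \ref{thm: graphisom}), discards the class that is forced to be an isosceles trapezoid via Lemma \ref{cla:isosceles}, and then certifies that the surviving three classes are realizable by solving Cayley--Menger determinant systems (Corollary \ref{cor: realize}, Algorithm \ref{alg: check4}). You instead count $S_4$-orbits of partitions of $E(K_4)$ into classes of sizes $3,2,1$ by hand, organized by the shape of the triple-distance subgraph (triangle, star $K_{1,3}$, path); this yields one triangle orbit, one star orbit, and two path orbits (your Burnside check: $(60+6\cdot 4+3\cdot 4)/24=4$), which are exactly the paper's four classes, now with geometric meaning attached---types M, C, R plus one degenerate class. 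Your instinct about where the difficulty sits is also exactly right: the symmetric path coloring (triple distance along a $4$-vertex path, doubled distance on both diagonals) is the one killed by general position. But note that the paper disposes of it with a cleaner weapon than the coordinate/branch analysis you propose: Halsted's criterion, quoted in Lemma \ref{cla:isosceles}, says equal opposite sides together with equal diagonals force an isosceles trapezoid, hence concyclicity, \emph{identically in the distances}---no solution branches need to be examined---while the asymmetric path coloring survives and is type R, with explicit distances recorded in the appendix. The trade-off between the two routes: yours is self-contained, human-verifiable, and explains \emph{why} there are three types; the paper's machinery looks heavy for $n=4$ precisely because it is built to scale to $n=5$ (where $12{,}600$ candidate matrices reduce to the $27$ realizable classes of Theorem \ref{thm:27on5}) and, in principle, to larger $n$ and higher dimensions, where hand enumeration is hopeless.
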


\begin{remark}
Types M and C were found by Liu, but type R does not appear in his manuscript.  Instead, Liu presents a third type in which the four points form the corners of a parallelogram.  We do not include this here, because this parallelogram type is not a strict distance class; it is in the same distance class as configurations that violate general position. Refer to Remark \ref{rem:parallelogram} for more details on this exclusion. However, together with Liu's result, we obtain a complete classification of distance classes on four point with allowable crescent configurations.
\end{remark}


\begin{figure}[h]
\includegraphics[width=0.6\textwidth]{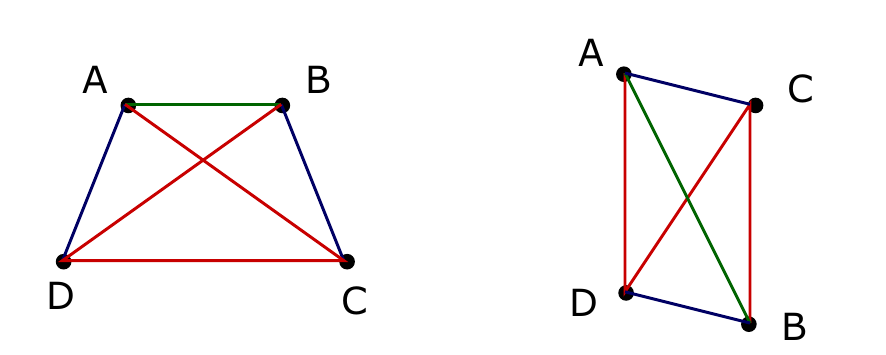}
\caption{Liu's parallelogram, shown on the right, has the same distance set as the trapezoid on the left.  This trapezoid violates general position, thus the parallelogram does not have a strict distance class.}
\label{fig:realisom}
\end{figure}
\begin{figure}[h]
\includegraphics[width=\linewidth]{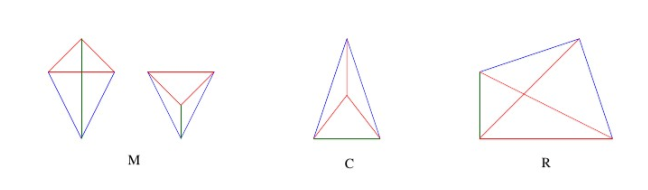}
\caption{Types M, C, and R.}
\label{fig:mcr}
\end{figure}
\begin{theorem}\label{thm:27on5}
Given a set of four distinct distances $\{d_1,d_2,d_3,d_4\}$ on five points,
there are at least 27 strict distance classes with allowable crescent configurations. 
In Figure \ref{fig:allFIVE} we provide the graph realizations for each type.
\end{theorem}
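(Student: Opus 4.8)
The plan is to recast the entire problem as an edge-coloring question on the complete graph $K_5$ and then to separate the combinatorial enumeration from the geometric realizability. A crescent configuration on five points must determine exactly four distinct distances, and the multiplicity of the distance $d_i$ is forced to equal $i$: since the ten edges of $K_5$ must split as $\binom{5}{2}=10=1+2+3+4$ with each multiplicity realized once, the only admissible multiplicity vector is $(1,2,3,4)$. Hence every candidate corresponds to a partition of the ten edges of $K_5$ into color classes of sizes $1,2,3,4$. Because these sizes are pairwise distinct, any graph isomorphism must carry each color class to the class of the same size, so two configurations are isomorphic precisely when a single vertex permutation in $S_5$ maps one edge-coloring to the other. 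The first step is therefore to enumerate the orbits of such colorings under the action of $S_5$ on the edge set of $K_5$, exactly paralleling the four-point classification of Theorem~\ref{thm:3on4}.

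First I would carry out this orbit enumeration, either by a canonical-form search or via Burnside's lemma applied to the edge action of $S_5$; since $12600$ labeled colorings are divided by a group of order $120$, this yields a finite list of roughly one hundred combinatorial types. Each type is then tested against the \emph{necessary} conditions for a realization by five points in general position in the plane: (i) the triangle inequality must hold strictly for each of the $\binom{5}{3}=10$ triples, ruling out collinear triples and degenerate triangles; (ii) general position must be respected, so no three points are collinear and no four are concyclic; and (iii) the distance-geometric constraints of planar embeddability must be consistent. Condition (iii) is expressed through Cayley--Menger determinants: the $4\times 4$ determinant of every triple must be strictly positive (non-degenerate triangle), while the $5\times 5$ determinant of every four-point subset must vanish, forcing the configuration to live in $\mathbb{R}^2$ rather than in a higher-dimensional host. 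These filters eliminate the combinatorial types that carry no planar geometric meaning.

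For each surviving type I would then produce an explicit realization, building the point set incrementally: fix two points at the distance prescribed by one edge, locate a third from its two distance constraints, and continue, at each stage intersecting the circles dictated by the colored edges and recording whether a consistent real solution exists. The language of rigidity is useful here: once enough distances are specified the framework becomes rigid, so each realizable type yields only a discrete (finite) set of congruence classes of point sets, which simultaneously certifies existence and confirms that distinct types are genuinely distinct. Tallying the types that survive the necessary conditions and admit such a realization should give exactly $27$, with explicit coordinates collected in Figure~\ref{fig:allFIVE}.

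The main obstacle is the bridge between conditions (i)--(iii) and actual realizability: a coloring can pass all $\binom{5}{3}$ triangle tests and still fail to close up globally, or be realizable only in $\mathbb{R}^3$, while proving that a realization \emph{does} exist requires either solving the nonlinear distance system explicitly or invoking a Cayley--Menger/rigidity argument to guarantee a real, non-degenerate planar solution. Managing this interplay between the discrete edge-coloring data and the continuous solution variety—and doing so uniformly across the full list of candidate types—is where the real work lies.
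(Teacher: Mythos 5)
Your proposal follows essentially the same route as the paper's proof: enumerate the edge-colorings of $K_5$ with class sizes $1,2,3,4$ up to isomorphism (the paper does this by threading the $12{,}600$ permutations of the distance multiset through symmetric distance matrices and grouping equal distance sets, justified by Theorem \ref{thm: graphisom}), then filter by planar realizability and general position via Cayley--Menger and Euclidean distance-matrix determinants, solving the resulting polynomial systems to exhibit explicit realizations---exactly the content of Algorithms \ref{alg:class} and \ref{alg: check4}. The only differences are organizational: the paper inserts an intermediate purely combinatorial degeneracy filter (stars, three isosceles triangles on a common base, isosceles trapezoids) before the algebra, and your direct $S_5$-orbit enumeration replaces the paper's distance-set grouping, but both are computer-assisted versions of the same two-stage argument.
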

\begin{figure}[h]
\includegraphics[width=\textwidth]{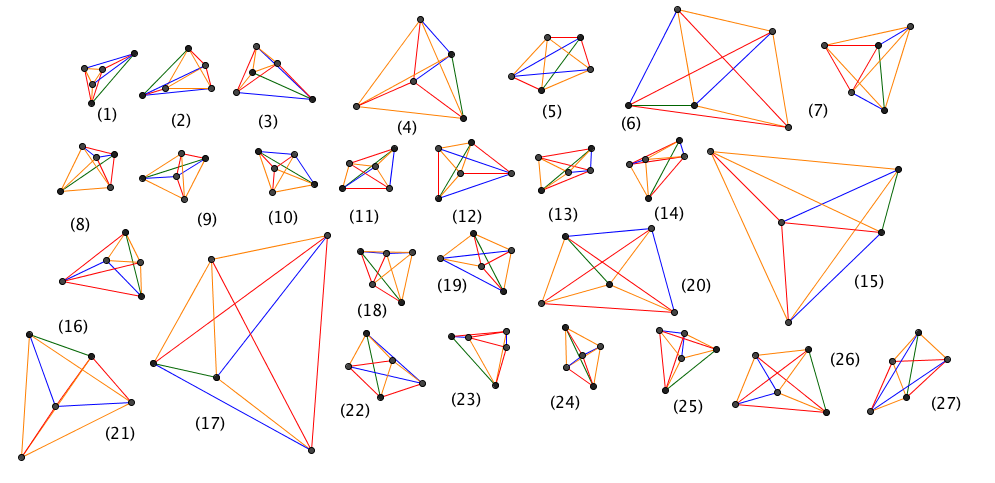}
\caption{Representatives for all possible crescent configurations on five points.}
\label{fig:allFIVE}
\end{figure}

\begin{remark}
In Theorem \ref{thm:3on4}, we are able to provide an exact number because for all the possible strict distance classes, we were able to give a geometric realizations. However, we are only able to provide a lower bound in \ref{thm:27on5}. The reason behind this lie in what the algorithm does both before and after it narrows down to only the strict distance classes. The parallelogram and the isosceles trapezoid have adjacent matrices generated by the same distance class on four points, making such distance class not strict by definition. Any distance class on five points containing the afore-mentioned distance class on four points is therefore not strict, and thrown out at the beginning of the algorithm. Once the strict distance classes are identified, the algorithm tests just one matrix generated by each class, and throws out any class whose corresponding matrix is not geometrically realizable.

\end{remark}

The advantage of this algorithmic method is that it can be applied in higher dimensions, though we hope that the running time of $\mathcal{O}(n^{n})$ can still be vastly improved.

In Appendix \ref{app: 5points} we include distance sets and realizable distances for each crescent configuration on four and five points.
\par

In Section 2, we introduce our distance geometry approach and prove a classification of crescent configurations for a general $n$. We follow this in Section 3 with an outline of the first half of the algorithm used to achieve Theorems \ref{thm:3on4} and \ref{thm:27on5}. 
We then move to Section 4 where we discuss how distance geometry methods may be applied to determine whether a distance set is realizable. In Section 5, we outline the second half of the algorithm, completing the proofs for Theorems \ref{thm:3on4} and \ref{thm:27on5}. 
Lastly in Section 6, we discuss potential future work, including improvements to our algorithm as well as extensions to higher number of points and dimensions. 

\begin{rem} The authors are happy to provide copies of any code referenced in the course of this paper. Please email \href{mailto:Steven.Miller.MC.96@aya.yale.edu}{Steven.Miller.MC.96@aya.yale.edu}.
\end{rem}

\section{Classification of Crescent Configurations}\label{sec:classification}
In this section we provide the key definitions and theorems that we use to classify crescent configurations.
\begin{defi}[General Position \cite{SM15}]\label{def:genpos}
We say that n points are in general position in $\mathbb{R}^d$ if no d+1 points lie on the same hyperplane and no d+2 lie on the same hypersphere.
\end{defi}

\begin{defi}[Crescent Configuration \cite{SM15}] We say $n$ points are in crescent configuration (in $\mathbb{R}^{d}$) if they lie in general position in $\mathbb{R}^{d}$ and determine $n - 1$ distinct distances, such that for every $1 \leq i \leq n - 1$ there is a distance that occurs exactly $i$ times.
\end{defi}

The notion of general position is very important in the construction of crescent configurations. Without this notion, the problem of placing $n$ points in $\mathbb{R}^d$ to determine $n-1$ distinct distances satisfying the prescribed multiplicities becomes trivial. By simply placing $n$ points on a line in an arithmetic progression, we solve the problem in any dimension. \\

For now, we keep the following definitions general. In section 3 we will discuss the case in which the mentioned distances are measured with the Euclidean metric.

\begin{defi}[Distance Coordinate]The \emph{distance coordinate}, $D_{A}$, of a point $A$ is the multiset of all distances, counting multiplicity, between $A$ and the other points in a set $\mathcal{P}$. Order does not matter.
\end{defi}

\begin{defi}[Distance Set] The \emph{distance set}, $\mathcal{D}$ corresponding to a set of points, $\mathcal{P}$, is the multiset of the distance coordinates for each point in the $\mathcal{P}$. 
\end{defi}

\vspace{0.2cm}
\begin{defi}[Distance Class]\label{def:disclass}
Two configurations are said to belong to the same \emph{distance class} if they possess the same distance set.
\end{defi}

\vspace{0.2cm}
\begin{defi}[Strict Distance Class]\label{def:strictclass}
A \emph{strict distance class} is a distance class in which no elements satisfies one or more of the following criteria: 
\begin{enumerate}
\item The configuration contains one point at the center of a circle of radius $d_{i}$ with four or more points on this circle as seen in Figure \ref{fig:star}.
\item The configuration contains three (or more) isosceles triangles sharing the same base.
\item The configurations contains four points arranged on the vertices of an isosceles trapezoid.
\end{enumerate}
\end{defi}

We will discuss this further in the following section.
\begin{figure}[h]
\includegraphics[width=0.3\textwidth]{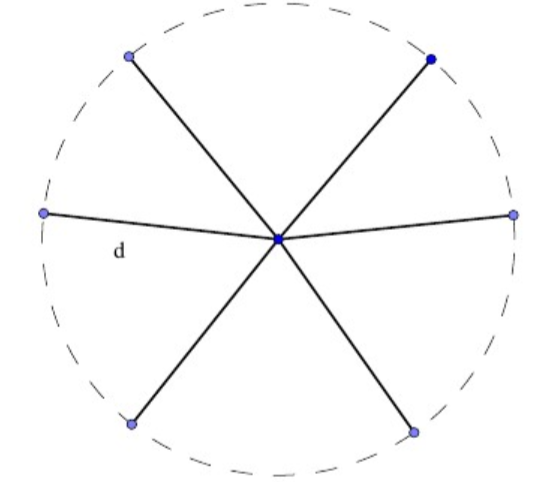}
\caption{The central point of this configuration has distance coordinate $\{d,d,d,d,d,d\}$.}
\label{fig:star}
\end{figure}

\vspace{0.2cm}

We note that a crescent configuration on $n$ points can be considered a weighted complete graph with $n-1$ distinct weights associated to the edges in a certain manner, so that the configuration can be realized in $\mathbb{R}^d$. The adjacency matrix, thus, is a natural way to store  information about the configuration. \\
\par
The graph isomorphism problem, however, is known to belong to the complexity class NP, with the best currently accepted upper bounds for solving time of $\exp(O(\sqrt{n \log n}))$ due to Babai and Luks \cite{Babai}.  As such, even if we know that two crescent configurations share the same distance set, it is not obvious, nor necessarily the case, that there exists an isomorphism between them. One example is the following:\\
\begin{center}
$A = \begin{bmatrix}
0 & 3&4&2&4\\
3&0&4&3&2\\
4&4&0&4&3\\
2&3&4&0&1\\
4&2&3&1&0\\
\end{bmatrix}$ \hspace{0.5in}
$B = \begin{bmatrix}
0&4&4&4&3 \\
4&0&1&3&2 \\
4&1&0&2&3\\
4&3&2&0&4\\
3&2&3&4&0\\
\end{bmatrix}$.
\end{center}

\noindent Note that there exists a function that maps the vertices of $A$ to the vertices of $B$, defined by $A_1 \mapsto B_4$, $A_2 \mapsto B_5$, $A_3 \mapsto B_1$, $A_4 \mapsto B_2$ and $A_5 \mapsto B_3$. However, the configurations given by these two matrices are not isomorphic as weighted graphs because if they were, $w_A(A_1, A_2) = w_B(B_4, B_5)$ but $w_A(A_1, A_2) = 3$ while $w_B(B_4, B_5) = 4$.\\
\par
That being said, we do now have a tool to encode information of the crescent configuration to some extent. As a result, we will concentrate on determining whether the potential distance sets are realizable, and for each such distance set, finding one crescent configuration corresponding to it.

\section{Method for Counting Strict Distance Classes}\label{sec: counting}

As a result of the discussion above, we have a way to classify crescent configurations into distance classes using distance sets. We now further refine these to strict distance classes. \\

In this section we provide a sketch of the algorithm used to find these strict distance classes. A pseudocode is provided in Appendix \ref{app: algorithms}.

\begin{remark} Since the algorithm can only distinguish the similar permutations according to the distance labels, the resulting distance sets may define crescent configurations that are not geometrically realizable. We address this concern later in the paper (see Section \ref{sec:geomREAL} on geometric realizability).
\end{remark}
\par 
Consider a set of distances, $\{d_1,d_2,d_2,...,d_{n-1}\}$, associated to a crescent configuration on $n$ points. This set of distances may be threaded through an $n \times n$ adjacency matrix.

From this point on, we refer to these matrices as \textit{distance matrices}; however, they are the equivalent of weighted adjacency matrices in graph theory.

\begin{defi}
Let $X = \{x_1, \dots, x_n\}$ be a finite metric space with metric $d$.  Then the \emph{distance matrix} corresponding to $X$ is $M = (d_{i,j})$, where the entry $d_{i,j}$ is given by $d(x_1, x_j)$.
\end{defi}
In our explorations, we are only considering crescent configurations in Euclidean space.  Thus, we consider the special case in which $X$ is a finite subset of Euclidean space, and $d$ is the inherited Euclidean distance metric.  
\begin{defi}
A real-valued $n \times n$ matrix $M = (m_{i,j})$ is a \emph{Euclidean distance matrix} if there exist points $\{x_1, \dots, x_n\}$ such that $m_{i,j} = d(x_i, x_j)$, where $d$ is the Euclidean distance metric. 
\end{defi}
Much of our explorations involve determining whether a given matrix is a Euclidean distance matrix.  Thus, we define the following class of candidates for Euclidean distance matrices.  

\begin{defi}
We call a real-valued, square matrix $M \in M_n(\R)$ a \emph{potential distance matrix} if it satisfies the following:
\begin{enumerate}
\item $M$ is symmetric.
\item All entries off the diagonals are strictly positive.
\item The diagonal entries of $M$ are all $0$.  
\end{enumerate}
\end{defi}
As each configuration has a distance matrix associated to it, we can generate all possible configurations by threading all permutations of $\{d_1,d_2,d_2,...,d_{n-1}\}$ through potential distance matrices. Using standard combinatorial techniques, we can quickly see that the number of configurations on $n$ points generated by this method is given by
\begin{equation}
\frac{(\frac{n(n-1)}{2})!}{n!(n-1)!\cdots 1!}.
\end{equation}
This yields 60 configurations on four points, 12,600 on five points, and 37,837,800 on six points. 
We would now like to further refine our classifications to distance classes using Definition \ref{def:disclass}.
\par
By implementing this restriction, we are able to significantly simplify the problem of identifying realizable crescent configurations. This is especially true for the five point case, as the 12,600 potential configurations may now be approached from a more manageable number, as each distance class can contain no more than 120 different arrangements of the distance set and will actually contain significantly less due to the multiplicities of the distances. It would, in fact, be a very tractable problem to test the realizability of each member of a particular distance class, thus finding all crescent configurations with a given distance set. However, we choose instead to focus on distance classes that do not present any immediate issues with general position. We may then test one member from each for realizability. In doing so, we may then determine a strict lower bound for the number of crescent configurations on $n$ points.

\par
A computer program may then be used to group together all potential distance matrices defining configurations with identical distance sets. These groups then represent our distance classes, and we can conduct the remainder of our analysis on one representative from each class. For $n=4$, this reduces our initial $60$ to $4$ classes. For $n=5$, it reduces our initial $12,600$ to \textcolor{red}{$98$} classes.
\par
 Having finished this classification, we must now refine our classes to \textit{strict} distance classes (see Definition \ref{def:strictclass}). Recall that these are distance classes in which every realizable configuration is more likely to obey general position. To see this, we note that there are three degenerate cases specific to $\R^2$ that can always be arranged in such a way to force the configuration to violate general position. If none of the elements of a distance class fall into one of these cases, it is called a strict distance class. By restricting ourselves to strict distance classes, we may eliminate entire classes if one representative is proven to be degenerate. This restriction leaves us with 3 potential configurations when $n=4$ (see below for more detail on the 4th case) and 85 potential configurations when $n=5$.

To review from the previous section, the degenerate cases that prevent a distance class from being strict are as follows:
\begin{enumerate}
\item The configuration contains one point at the center of a circle of radius $d_{i}$ with four or more points on this circle as seen in Figure \ref{fig:star}.
\item The configuration contains three (or more) isosceles triangles sharing the same base.
\item The configurations contains four points arranged on the vertices of an isosceles trapezoid.
\end{enumerate}
Although there exist other cases that will force a class to violate general position, these three cases may be accounted for by only considering the distance matrices. 
\par
Case 1 is very simple to account for and is only possible for $n\geq 5$. In order to eliminate these cases, we remove configurations containing one or more distance coordinates in which a particular distance, $d_{i}$, occurs four or more times. In Algorithm \ref{alg:class}, this case and case 3 are accounted for in the procedure REMOVECYCLIC.
\par
As with case 1, case 2 is only possible for $n\geq 5$. If three or more isosceles triangles share the same base, then all of their apexes must reside on the line bisecting this base, forcing them to violate general position. 
\par
In a distance matrix, an isosceles triangle is indicated by a matching pair of distances occurring in a row. 
Therefore, we remove all potential distance matrices in which three or more rows contain a matching pair occurring in the same slots in each row.
This case is accounted for in Algorithm \ref{alg:class} by the procedure REMOVELINEARCASE. 
\par
Case 3 requires us to remove all configurations that contain a subset or subsets of four points defining an isosceles trapezoid, since isosceles trapezoids are always cyclic quadrilaterals. 

In Algorithm \ref{alg:class}, the procedures SUBMATRICES and REMOVECYCLIC are included to account for these cases, which may be identified by their distance matrices using the following lemma.

\begin{lemma}\label{cla:isosceles} A $4\times 4$ distance matrix 
may be arranged to define
an isosceles trapezoid if and only if one of the following holds:
\begin{enumerate}
\item the rows of the matrix are generated by permuting the entries of only one row vector such that 
\begin{enumerate}
\item the matrix has only two distinct distances, or
\item the matrix only has three distinct distances,
\end{enumerate}
\item the matrix consists of two distinct rows, and a permutation of each of these two rows such that 
\begin{enumerate}
\item the matrix has three distinct distances with multiplicity no greater than three (note that this means each distance occurs no more than six times in the distance matrix), or
\item the matrix has four distinct distances with multiplicity no greater than two (each distance occurs no more than four times in the distance matrix).
\end{enumerate}
\end{enumerate}
\end{lemma}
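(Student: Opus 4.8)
The plan is to treat a $4\times 4$ distance matrix as an edge-labeling of the complete graph $K_4$, whose six edges carry the pairwise distances, and to run a finite case analysis keyed on two invariants that are constant on the isomorphism classes of Theorem~\ref{thm: graphisom}: the multiset of \emph{distance coordinates} (the ``row types''), and the multiplicity profile of the distinct distances. The target shape has a clean signature: labeling the vertices of an isosceles trapezoid in cyclic order $A,B,C,D$ with bases $AB,CD$, one has equal legs $AD=BC$ and equal diagonals $AC=BD$, which forces $D_A=D_B$ and $D_C=D_D$. Thus a generic isosceles trapezoid presents exactly two distinct row types, each of multiplicity two; when the two bases coincide the four row types collapse to one (the rectangle), and a further coincidence gives the square. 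I would first record these signatures together with the induced edge-multiplicities, since they are precisely the data the lemma's four conditions encode.

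The geometric engine of the proof is the criterion: \emph{a simple quadrilateral $ABCD$ with $AD=BC$ and $AC=BD$ is an isosceles trapezoid.} I would prove this once, via the SSS congruence $\triangle ABD \cong \triangle BAC$ (correspondence $A\mapsto B$, $B\mapsto A$, $D\mapsto C$), which is realized by the reflection across the perpendicular bisector $\ell$ of $AB$; this reflection sends $D\mapsto C$, so $\ell$ also perpendicularly bisects $CD$, giving $AB\parallel CD$ together with the symmetry that defines an isosceles trapezoid. Crucially this argument uses only the metric equalities, so it applies to \emph{every} planar realization of a labeling exhibiting them, which is exactly what ``always defines'' demands.

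For the forward implication ($\Leftarrow$) I would dispatch the four conditions in turn. In case~1 the single row type forces each distance class to be a union of perfect matchings of $K_4$: two distances split the edges into a $4$-cycle plus a matching, i.e.\ a rhombus with equal diagonals, hence a square (1a); three distances give the three perfect matchings, i.e.\ a parallelogram with equal diagonals, hence a rectangle (1b). In case~2 the two-row-type structure reproduces the $D_A=D_B$, $D_C=D_D$ pattern, and the multiplicity bounds (at most three edges per distance when there are three distances, at most two per distance when there are four) pin down which edges are the legs, diagonals, and bases; I then invoke the criterion above. These multiplicity bounds are not cosmetic: they are exactly what separates an isosceles trapezoid, whose equal leg-pair and diagonal-pair are disjoint ``opposite'' edges, from a rhombus or a kite, whose equal edges are adjacent and which are not cyclic. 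For the reverse implication ($\Rightarrow$) I would argue the contrapositive, enumerating the finitely many remaining labelings of $K_4$ (again grouped by row-type profile) and exhibiting for each a realization that is not an isosceles trapezoid: three or more row types admit an asymmetric quadrilateral, while the two-row-type profiles violating the multiplicity bounds are realized by a non-square rhombus or a kite.

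The step I expect to be the main obstacle is the bookkeeping inside case~2 of $(\Leftarrow)$ together with its mirror in $(\Rightarrow)$: verifying that, for a \emph{fixed} row-type-and-multiplicity profile, the edge-arrangement is \emph{forced} to be the trapezoidal one, so that no rhombus- or kite-type imposter shares the same profile, and conversely that every profile outside the list genuinely admits a non-trapezoidal realization. This is where the multiplicity thresholds do all the work, and where one must be careful not to overlook borderline configurations, for instance isosceles trapezoids in which a base happens to equal a leg or a diagonal, which lower the number of distinct distances and can masquerade under an unexpected profile. Because only six edges and at most four distances are involved the enumeration is finite and terminates, but the case split is delicate and is the part most prone to error.
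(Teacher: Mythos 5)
Your proposal is correct in substance and rests on the same engine as the paper's proof: the characterization of an isosceles trapezoid by one pair of equal opposite sides together with equal diagonals, followed by a finite case analysis keyed on row types and distance multiplicities. The differences are in execution, and they cut both ways. Where the paper simply cites Halsted for the characterization, you prove it via the SSS congruence $\triangle ABD \cong \triangle BAC$ realized by a reflection; this makes the argument self-contained, but you should say explicitly why the congruence cannot instead be realized by the half-turn about the midpoint of $AB$ (that case forces $CD$ to cross $AB$, giving the crossed antiparallelogram, which contradicts simplicity) --- as written, "which is realized by the reflection" is an assertion, not an argument. Second, for the direction (trapezoid $\Rightarrow$ conditions) the paper argues directly: legs cannot equal diagonals (else two congruent isosceles triangles arise), so an isosceles trapezoid carries two, three, or four distinct distances, and each resulting matrix is checked against the list; you instead take the contrapositive, which obligates you to produce, for every profile outside the list, an explicit non-trapezoidal realization (rhombus, kite, asymmetric quadrilateral). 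That is a genuinely heavier burden, since realizability of each counterexample must be verified, though it has the virtue of showing the conditions are not merely necessary but sharp. Third, in case (1a) you conclude "hence a square" too quickly: combinatorially the doubled distance could a priori sit on a pair of opposite sides rather than on the diagonals (the paper treats both placements); either placement still yields equal opposite sides plus equal diagonals, so the trapezoid conclusion survives, but the square is not forced by the matrix alone. Finally, both you and the paper compress the same step: the case-2 bookkeeping, which the paper dispatches with "similar arguments" (the two distances common to all four points must be the diagonals and one pair of opposite sides), is exactly the spot you flag as your main obstacle, so neither write-up fully executes it.
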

\begin{proof}
($\Leftarrow$) According to Halsted \cite{Halsted}, a necessary and sufficient condition for a quadrilateral to be an isosceles trapezoid is that it has at least one pair of opposite sides with equal length and diagonals of equal length. It is not possible for these two lengths (sides and diagonals) to be equal because this would create two isosceles triangles that would have to be congruent. Therefore there are three cases for isosceles trapezoids: (1) four distinct distances, (2) three distinct distances, or (3) two distinct distances.
Figure \ref{fig: 4dist} presents possible realizations for each of these cases. From here, it is straightforward to show that each of these quadrilaterals satisfies one of the conditions stated in Lemma \ref{cla:isosceles}, thus completing this direction of the proof.
\begin{figure}[h]
\includegraphics[width=\linewidth]{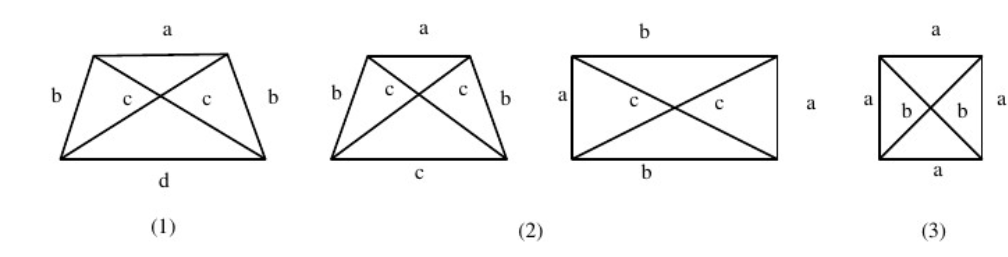}
\caption{(1) Four distinct distances, (2) three distinct distances, (3) two distinct distances. }
\label{fig: 4dist}
\end{figure}
\par
($\Rightarrow$) We now prove the other direction.
\par
We begin with condition 1(a): one generating row vector 
and two distinct distances.
\par
 Assume each row represents the distance coordinate $(a,a,b)$ (order of distances may differ among rows). Since we require all rows of the distance matrix to have the same distance coordinate, distance $b$ must touch every point yet only show up twice. Therefore, it must represent both diagonals or two opposite sides. Both cases yield a quadrilateral with a set of congruent opposite sides and congruent diagonals (since we only have two distinct distances), a necessary and sufficient condition for an isosceles trapezoid.
\par
For condition 1(b) - one generating row vector 
and three distinct distances: all three distances, $(a,b,c)$, must be common to all points. Thus assume the diagonals are not congruent; one has length $a$ and the other length $b$. Then the corner touching the diagonal of length $a$ must be touching a side of length $b$. However, this side also touches the other diagonal, violating the condition of our three distances. If we assume, instead, that we do not have a set of congruent opposite sides, then we inevitably end up with congruent \emph{adjacent} sides, again violating the condition. Thus, all cases must result in an isosceles trapezoid.
\par

Now we come to condition 2(a) - two distinct rows, each generates another row by permutation, 
and three distinct distances with multiplicity no greater than three. We note that there are only three types of distance sets we can have: $\{(a,a,a),(a,a,b),(a,b,c)\}$. Since we only have two distinct distance coordinates, if one was of the form $(a,a,a)$, then the distance $a$ would end up occurring five times in the configuration, violating the multiplicity condition. 
\par
If one of our distance coordinates has the form $(a,a,b)$, then the other distance set must have the form $(a,b,c)$,$(b,b,c)$, $(b,c,c)$,$(a,a,c)$, or $(a,c,c)$ in order to create three distinct distances. 
Since we only wish to show that this distance set may be arranged into an isosceles trapezoid,
it is enough to find one such arrangement for each example.
\par
We note that each distance cannot occur more than three times, so $(a,a,b)$ can occur at most twice in the distance set, and since $c$ is only found in one of the distinct coordinates, it is clear that it must occur in exactly two distance coordinates. Therefore, if our distinct distance sets are $(a,a,b)$ and $(a,b,c)$, we only need to consider the matrix
\begin{equation} \label{eq:rhombus1}
\begin{pmatrix}
0&c&a&b \\
c&0&b&a \\
a&b&0&a \\
b&a&a&0
\end{pmatrix}
\end{equation}
This yields the second configuration shown in figure \ref{fig: 4dist}. Clearly this is an isosceles trapezoid, so any configuration in this distance class will likewise be in the same class as an isosceles trapezoid.

\par
If our distinct distance coordinates are $(a,a,b)$ and $(b,b,c)$, however, the double presence of $a$ requires there to be three copies of the coordinate $(a,a,b)$. However, as before, the single presence of the $c$ requires two copies of $(b,b,c)$. We cannot have five distance coordinates for four points, so we reject this case.
\par
A similar argument may be made if our distinct distance coordinates are $(a,a,b)$ and $(b,c,c)$. Furthermore, the coordinates $(a,a,b)$ and $(a,a,c)$ will always result in the distance $a$ having multiplicity 4, so we reject this case as well.
\par
Finally, we reject the case where our coordinates are given by $(a,a,b)$ and $(a,c,c)$ because $b$ must occur in three coordinates and $c$ must occur in three, thus requiring at least five coordinates. Hence the only configuration allowed by condition 2(a) is is one which allows an isosceles trapezoid.
\par
We now end the proof by considering condition 2(b)- two distinct generating rows 
and four distinct distances with multiplicity no greater than two. We let our distinct distances be $a,b,c,$ and $d$. There are exactly six that may be measured in a set of four points. Thus, if four must be unique and none can occur more than twice, we end up with two distances with multiplicity two and two with multiplicity one. Without loss of generality, we say our distances are $\{a,a,b,b,c,d\}$.
\par
In the upper triangle of an adjacency matrix, each entry represents a side or diagonal of the configuration.  If a pair of sides does not share a common end point, then they must represet a pair of opposite sides or diagonals. Thus, in \eqref{eq:uppermatrix}, the pairs $(a_{12},a_{34})$, $(a_{13},a_{24})$, and $(a_{23},a_{34})$ must represent our pairs of opposite sides and diagonals.
\begin{equation}\label{eq:uppermatrix}
\begin{pmatrix}
0 & a_{12} & a_{13} & a_{14} \\
a_{12} & 0 & a_{23} & a_{24} \\
a_{13} & a_{23} & 0 & a_{34} \\
a_{14} & a_{24} & a_{34} & 0 
\end{pmatrix}
\end{equation}
Again, it is enough to show that just one arrangement of the distances in this case results in an isosceles trapezoid. Indeed, when $a_{13}=a_{24}$ and $a_{23}=a_{14}$ we find that we have two pairs of matching sides/diagonals, so the proof is complete.

\end{proof}
Once these cases have been eliminated, we are left with three distance classes for four points and $51$ for five points.
\begin{rem}\label{rem:parallelogram}
To see that Liu's configuration is not in a strict distance class, we can look at the adjacency matrix. The adjaency matrix for Liu's configuration is
\begin{equation}
\begin{pmatrix}
0 & d2 & d1 & d3 \\
d2 & 0 & d3 & d3 \\ 
d1 & d3 & 0 & d2 \\
d3 & d3 & d2 & 0
\end{pmatrix}.
\end{equation}
By applying the row (and column) permutation $1 \to 1$, $3\to 2$, $ 4 \to 3$, $2 \to 4$, we get the adjacency matrix
\begin{equation}
\begin{pmatrix}
0 & d1 & d3 & d2 \\
d1 & 0 & d2 & d3 \\
d3 & d2 & 0 & d3 \\
d2 & d3 & d3 & 0
\end{pmatrix},
\end{equation}
which defines an isosceles trapezoid.
\end{rem}
\begin{rem}\label{rem:runtime}
It should be noted that the runtime of Algorithm \ref{alg:class} is $\mathcal{O}(n^{n})$, so its use is limited to crescent configurations on relatively few points. However, we believe that with enough processing power, upper bounds on the number of strict distance classes can be established using the algorithm for small $n$ such as $7$ and $8$. As such, at this time, this does not pose much of an issue to the progress of this research, as, to the authors' knowledge, no crescent configuration on more than $8$ points has yet to be found.
\end{rem}

\section{Geometrically Realizability of Crescent Configurations} \label{sec:geomREAL}
In the previous sections we developed a way to  find every strict distance class of crescent configurations on $n$ points. However, it is not clear that there exists a set of points in general position in $\R^m$ such that the distances between the points correspond to each of these distance set. In this section, we discuss methods of sharpening our previous bounds by taking this consideration into account.  In order to determine whether a distance set can be realized as a crescent configuration in $\R^n$, we must answer two main questions. First, we must determine whether there exist points in $\R^m$ that realize the distance set.  We refer to this as geometric realizability.  Secondly, we must determine whether such a set of points exist in general position.  These two questions turn out to be closely related, and can be answered using similar techniques. \\

We first set up some framework and introduce some background material. 
Recall that up to this point, each crescent configuration can be expressed
in the form of an $n\times n$ matrix M with the following properties:

\begin{itemize}
\item M is a potential distance matrix.
\item The multiset of non-diagonal entries is:
\[\{a_1, a_2, a_2, \dots, a_n, a_n, \dots a_n\}\]
 where each $a_i$ is repeated $i$ times.  

\end{itemize}
We formally define geometric realizability as follows:

\begin{defi}\label{def:geomreal}
A strict distance class for a crescent configuration on $n$ points is \emph{geometrically realizable} in $\R^m$ if there exist some distances $d_1, \dots,d_{n-1} \in \R_{\geq 0}$ such that setting $a_i=d_i$ yields 
a distance set of $n$ points in $R^m$.  
\end{defi}

\begin{remark}
Burt et. al. \cite{SM15} showed that given an integer $n$, there exists $m$ such that an $n-$crescent configuration exists in $\R^m$.  In this section, we fix $m$ and determine whether a given distance class is geometrically realizable as a crescent configuration in $\R^m$.
\end{remark}

Based on our classification system defined previously
, we defined geometric realizability in terms of a set of distances between each pair of points. The problem of determining whether a given set of distances can be realized in a space is well-studied, and is known as the distance geometry problem. Thus, we can use techniques from distance geometry in order to sharpen our previous bound on the number of geometrically realizable configurations on $n$ points in $\R^2$, and also extend our exploration to higher dimensions.  These techniques determine whether a potential distance matrix has a geometric realization.  Recall from section 3 that two non-isomorphic potential distance matrices can belong to the same distance class.  Thus, in order to completely determine whether a distance set is geometrically realizable, one must use the methods outlined in this section to check every potential distance matrix corresponding to the distance class. 

The main tool we use is the Cayley-Menger determinant, which can intuitively be thought of as a way of computing volumes of simplices in Euclidian space.  

\begin{defi}
Consider $M$, an $n\times n$ symmetric matrix of the following form:

$$ M=
\left(\begin{matrix}
0 &  d_{1,2}  & \ldots & d_{1,n}\\
d_{2,1}  &  0 & \ldots & d_{2,n} \\
\vdots & \vdots & \ddots &  \vdots\\
d_{n,1} & d_{n,2} & \dots &0
\end{matrix}
\right),
$$
where $d_{i,j} = d_{j,i}$, $d_{i,j}>0$ for all $i,j$.
The \emph{Cayley-Menger determinant} corresponding to $M$, denoted $CM(M)$, is the following determinant:

$$
CM(M) = \det\left(\begin{matrix}
0 &  d_{1,2}^2  & \ldots & d_{1,n}^2 & 1\\
d_{2,1}^2  &  0 & \ldots & d_{2,n}^2 &1\\
\vdots & \vdots & \ddots &  \vdots &\vdots\\
d_{n,1}^2 & d_{n,2}^2 & \dots &0&1\\
1  &   1       &\ldots & 1 & 0
\end{matrix}
\right)
$$

\end{defi}

The Cayley-Menger determinant can be used to determine whether a given $(n+1)\times (n+1)$ matrix is the distance matrix for a set of points in $\R^n$. Failing to adhere to the triangle inequality, for example, could prevent a given matrix from being geometrically realizable as a distance matrix. 

\begin{theorem}\label{thm: cayley n+1}
\cite{SS86}
Let $M$ be a potential distance matrix.
For $2\leq i \leq n+1$, let $M_i$ be the minor of $i$ consisting of the first $i$ rows and columns of $M$.  
Then, $M$ is the distance matrix of $n+1$ points in $\R^n$ if and only if $(-1)^iCM(M_i)\geq 0$ for each $i$.  
\end{theorem}


Note that if a $(n+1) \times (n+1)$ matrix is a Euclidean distance matrix,
 then it must be 
realizable as the Euclidean distance matrix of points in $\R^n$.  Thus, the above result can be thought of as a way of identifying obstacles preventing a set of distances to exist in Euclidean space.  When considering potential distance matrices of size greater than $n+1$, we must also take into account dimensional constraints in order to determine whether these matrices can be realized as distance matrices for points in $\R^n$. 




The following result allows us to determine whether a given $(n+2)\times(n+2)$ matrix is the distance matrix for a set of $n+1$ points in $\R^n$.  

\begin{remark}
Note: In the following, we use $[n]$ to refer to the set $\{1,2, \dots, n\}.$
\end{remark}

\begin{defi}
Let $M$ be an $n \times n$ potential distance matrix.  
Let $S \subset [n]$.  We define $M_S$ to be the minor of $M$ consisting of the rows and columns of $M$ indexed by the elements of $S$.  
\end{defi}

\begin{theorem}\label{thm: cayley n+2}
\cite{LL}
Let $M$ be an $(n+2) \times (n+2)$ potential distance matrix
, and suppose $M_{[n+1]}$ 
can be realized as a distance matrix in $\R^n$.  Then $M$ can be realized as a Euclidean distance matrix for points in  $\R^n$ if and only if $CM(M) = 0$.
\end{theorem}

The intuition behind this result comes from the relation between Cayley-Menger determinants and volumes of simplices.  The gist of the proof is $CM(M) = 0$ roughly means a simplex in $\R^{n+1}$ with the specified lengths has volume $0$, and thus is embeddable in $\R^{n}$.  

The following generalization, given by Blumenthal in \cite{B53}, gives a complete characterization of whether a given square matrix of size $n >m$ can be expressed nontrivially as the distance matrix of points in $R^n$.

\begin{theorem}\label{thm: cayley general}
\cite{B53}
Let $M$ be an $n \times n$ potential distance matrix.
M is the Euclidean distance matrix of a set of $n$ points  in $R^m$  if and only if: 

\begin{itemize}
\item There exists a set $S_0 \subset [n]$ of size $m+1$ such that $CM_{S_0}$ satisfies the conditions of \ref{thm: cayley n+1}
\item For every $S \subset [n]$ of size $m+2$ or $m+3$ such that $S_0 \subset S$, $CM(M_S) = 0$. 
\end{itemize}

\end{theorem}




Thus far, we have focused on techniques that tell us whether a given matrix is geometrically realizable in some given dimension.  However, in order for a potential distance set to correspond to a valid crescent configuration, it must also have a geometric realization that is in general position.  

Recall that in order for a crescent configuration to lie in general position in $\R^m$, it must satisfy the following properties:
\begin{enumerate}
\item No $m+1$ points lie on the same hyperplane.
\item No $m+2$ points lie on the same hypersphere.
\end{enumerate}
As we have seen above, the Cayley-Menger determinant can be used to determine whether the first condition holds.  A similar determinant can be used to determine whether the second condition holds:

\begin{defi}
Consider $M$, an $n\times n$ symmetric matrix of the following form:

$$ M=
\left(\begin{matrix}
0 &  d_{1,2}  & \ldots & d_{1,n}\\
d_{2,1}  &  0 & \ldots & d_{2,n} \\
\vdots & \vdots & \ddots &  \vdots\\
d_{n,1} & d_{n,2} & \dots &0
\end{matrix}
\right),
$$
where $d_{i,j} = d_{j,i}$, $d_{i,j}>0$ for all $i,j$.
The \emph{Euclidean distance determinant} of M, denoted $ED(M)$, is given by the following determinant:

$$
ED(M) = \det
																							\left(
\begin{matrix}
0 &  d_{1,2}^2  & \ldots & d_{1,n}^2 \\
d_{2,1}^2  &  0 & \ldots & d_{2,n}^2\\
\vdots & \vdots & \ddots &  \vdots &\vdots\\
d_{n,1}^2 & d_{n,2}^2 & \dots &0
\end{matrix}
\right)
.$$
\end{defi}

\begin{theorem}\label{thm: circles} \cite{CS}
Let $M$ be the distance matrix corresponding to points $P_1, \dots P_{n+2}$ in Euclidean space. These points lie on a hypersphere in $\R^{n}$ if and only if $ED(M)= 0$.
\end{theorem}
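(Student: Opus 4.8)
The plan is to prove the statement by exhibiting an explicit symmetric factorization of the Euclidean distance matrix $E$ and reading off the cosphericity condition from the resulting determinant. First I would record the elementary identity $d_{i,j}^2 = \|P_i\|^2 + \|P_j\|^2 - 2\,P_i \cdot P_j$ and collect the data of the configuration into three objects: the column vector $\mathbf{s} = (\|P_1\|^2,\dots,\|P_n\|^2)^T$, the all-ones column $\mathbf{1}$, and the coordinate matrix $X$ whose $i$-th row is $P_i^T$. In these terms one has $E = \mathbf{s}\mathbf{1}^T + \mathbf{1}\mathbf{s}^T - 2XX^T$, so that $E$ is a rank-two correction of the negative of the Gram matrix $XX^T$.

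The central observation is that this expression is a single congruence. If the points genuinely lie in $\R^{n-2}$, then $X$ is $n\times(n-2)$ and the block matrix $W = [\,\mathbf{s}\mid X\mid \mathbf{1}\,]$ is exactly $n\times n$; setting $J = \left(\begin{smallmatrix} 0 & 0 & 1 \\ 0 & -2I_{n-2} & 0 \\ 1 & 0 & 0\end{smallmatrix}\right)$ with blocks of sizes $1, n-2, 1$, a direct multiplication gives $E = W J W^T$. Since $\det J = \pm 2^{\,n-2} \neq 0$, this yields $\det E = \det(J)\,(\det W)^2$, whence $\det E = 0$ if and only if $\det W = 0$. It is precisely the requirement that $W$ be square that pins the ambient dimension at $n-2$, which explains why this is the relevant dimension in the statement.

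It then remains to interpret $\det W = 0$ geometrically. The columns of $W$ are linearly dependent exactly when there exist scalars $a, e$ and a vector $b\in\R^{n-2}$, not all zero, with $a\mathbf{s} + Xb + e\mathbf{1} = 0$; componentwise this reads $a\|P_i\|^2 + b\cdot P_i + e = 0$ for every $i$. When $a\neq 0$ I would normalize and complete the square to rewrite this as $\|P_i - c\|^2 = \rho$ with $c = -b/(2a)$, meaning all $P_i$ lie on a common hypersphere. This handles both implications simultaneously: a cospherical configuration produces such a dependence, hence $\det E = 0$, and conversely a vanishing determinant forces the sphere equation.

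The step I expect to be the main obstacle is the degenerate branch $a = 0$, together with the dimension bookkeeping, rather than the linear algebra itself. When $a=0$ the relation becomes $b\cdot P_i + e = 0$, placing the points on an affine hyperplane of $\R^{n-2}$ instead of a genuine sphere; I would dispatch this case by invoking the general position hypothesis of Definition \ref{def:genpos}, which forbids too many points on a common hyperplane, or alternatively by regarding a hyperplane as a degenerate limiting sphere. I would also need to justify that viewing the points in $\R^{n-2}$ is the correct normalization: if the configuration spans fewer dimensions then $XX^T$ is rank-deficient and both determinants vanish automatically, while embedding the points in a larger space requires adjusting the factorization so that $W$ stays square. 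Making this interface with the general-position convention precise is where the real care lies.
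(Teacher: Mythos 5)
The paper itself offers no proof of Theorem \ref{thm: circles}: it is imported verbatim from \cite{CS}, so there is no internal argument to compare yours against, and your proposal has to stand on its own. Its core does. The identity $E = \mathbf{s}\mathbf{1}^T + \mathbf{1}\mathbf{s}^T - 2XX^T$ is right; the factorization $E = WJW^T$ with $W = [\,\mathbf{s}\mid X\mid \mathbf{1}\,]$, which is square exactly when the ambient dimension is $n-2$, checks out (block multiplication gives $WJ = [\,\mathbf{1}\mid -2X\mid \mathbf{s}\,]$, hence $WJW^T = E$); one computes $\det J = (-1)^{n-1}2^{n-2} \neq 0$, so $\det E = \det(J)\,(\det W)^2$ and $\det E = 0$ if and only if the columns of $W$ are dependent, i.e.\ if and only if there is $(a,b,e)\neq 0$ with $a\|P_i\|^2 + b\cdot P_i + e = 0$ for all $i$. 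For $a \neq 0$, completing the square yields a common hypersphere (the squared radius is automatically nonnegative because the $P_i$ satisfy the equation), and the converse direction, cospherical implies dependence, is immediate. This is the standard linear-algebra proof of this classical fact, and it is cleaner and more self-contained than an appeal to \cite{CS}.

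The one genuine issue is the $a = 0$ branch, and of your two proposed escapes only the second is legitimate. The theorem as stated carries no general-position hypothesis --- it concerns arbitrary points --- so Definition \ref{def:genpos} is simply not available inside its proof. What your argument actually establishes is: $\det E = 0$ if and only if the points lie on a hypersphere \emph{or} on a hyperplane of $\R^{n-2}$, and the second alternative is not vacuous. For example, five distinct collinear points in $\R^3$ (so $n = 5$) have $\det E = 0$, since writing $P_i = t_i u + p_0$ gives squared distances $t_i^2 - 2t_it_j + t_j^2$, a sum of three rank-one matrices, so the $5\times 5$ matrix $E$ has rank at most $3$; yet these points lie on no $2$-sphere, as a sphere meets a line in at most two points. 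So the literal ``if and only if hypersphere'' statement is false unless one adopts the inversive-geometry convention that a hyperplane is a degenerate hypersphere (your second suggestion), or restates the conclusion as the sphere-or-hyperplane dichotomy, which is the form the result takes in the literature. For the paper's purposes the imprecision is harmless: in Corollary \ref{cor: realize}, a size-$(m+2)$ subset lying on a hyperplane but not on a sphere already violates general position through its $(m+1)$-point subsets, whose Cayley--Menger determinants vanish by Theorem \ref{thm: cayley}, so that degenerate case is caught by the other family of submatrices. With the conclusion restated as the dichotomy, your proof is complete and correct.
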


The following corollary follows directly from \ref{thm: cayley general} and \ref{thm: circles}, and gives a complete characterization as to when a given symmetric matrix can be realized as a distance matrix for points in general position.  

\begin{cor}\label{cor: realize}
Let $M$ be an $n \times n$ potential distance matrix.
Then $M$ is a Euclidean distance matrix for points in $\R^m$  in general position if and only if the following conditions hold.
\begin{enumerate}
\item There exists $S \subset [n]$ of size $m+1$ such that $M_{S_0}$ can be realized as the distance matrix of $m+1$ points in $R^{m+1}$.
\item For every $S$ of size $m+2$ or $m+3$ containing $S_0$, $CM(M_S) = 0$.  
\item For every $S$ of size $m+1$, $CM(M_S) \neq 0$.  
\item For every $S$ of size $m+2$, $ED(M_S) \neq 0$.
\end{enumerate}
\end{cor}

In turns out that we can adjust this slightly by dropping the stipulation that we only consider $S$ of size $m+2$ or $m+3$ if $S$ contains $S_0$, by using the following special case of theorem \ref{thm: cayley general}:

\begin{theorem}\label{thm: cayley special case}
\cite{B53}
Let $M$ be a real $(n+3) \times (n+3)$ potential distance matrix. 
Suppose that there is some $S_0 \subset [n]$ of size $n+1$ such that $M_{S_0}$ is the distance matrix of $n+1$ linearly independent points in $R^n$.  Then $M$ is the distance matrix of a set of $n+3$ points in $R_n$ if and only if:
\begin{enumerate}
\item For  the two subsets $S$ of size $n+2$ containing $S_0$, $CM(M_S) = 0$.
\item $CM(M) = 0$.
\end{enumerate}
\end{theorem}

We use the above theorem to establish the following version of the corollary:

\begin{cor}\label{cor: realize v2}
Let $M$ be a  $n \times n$ potential distance matrix.
Then $M$ is a Euclidean distance matrix for points in $\R^m$ in general position if and only if the following conditions hold.
\begin{enumerate}
\item There exists $S \subset [n]$ of size $m+1$ such that $M_{S_0}$ can be realized as the distance matrix of $m+1$ points in $R^{m+1}$.
\item For every $S$ of size $m+2$ or $m+3$, $CM(M_S) = 0$.  
\item For every $S$ of size $m+1$, $CM(S) \neq 0$.  
\item For every $S$ of size $m+2$, $ED(S) \neq 0$.
\end{enumerate}
\end{cor}

\begin{proof}
$(\rightarrow)$ Suppose $M$ is geometrically realizable in general position in $\R^m$.  It suffices to show that for every $S$ of size $m+2$ or $m+3$, $CM(M_S) = 0$. Consider some $S \subset [n]$ of size $m+3$.  $S$ must contain some $S_0$ of size $m+1$.  Since $M$ is realizable as a distance matrix for points in $\R^m$, so must $M_S$ and $M_{S_0}$, since the extra points can just be removed.  Furthermore, $M_{S_0}$ is the distance matrix of a set of $m+1$ points that form a linearly independent set in $R^m$, since $M$ is a distance matrix for points in general position.  
From \ref{thm: cayley special case}, we see that this implies $CM(M_s) = 0$, and for $S'$ of size $m+2$ such that $S \subset S' \subset S$, $CM(M_{S'}) = 0$.  Since $S$ was chosen arbitrarily, we see that all subsets of size $m+2, m+3$ must correspond to a Cayley-Menger determinant of 0.  

$(\leftarrow)$  If $M$ satisfies the conditions (1), (2), and (3), then it certainly satisfies the conditions of \ref{cor: realize} and thus is geometrically realizable in general position in $\R^m$.
\end{proof}


\par
Our application of \ref{cor: realize v2} to the distance sets on $4$ and $5$ points have allowed us to find a lower bound for the geometric realizability of each of the distance sets found using techniques from earlier sections. These geometrically realizable configurations are discussed in the following section. 

Thus far, most of our attentions have been focused on crescent configurations in the plane.  However, these techniques can be applied to finding crescent configurations in higher dimensions, furthering the work of Burt et. al. \cite{SM15}.

\section{Finding Geometric Realizations for Crescent Configurations} 
As stated in Section \ref{sec: counting}, Algorithm \ref{alg:class} yields three potential distance matrices on four points and 51 on five points, 
each corresponding to a different strict distance class.
However, these procedures do not guarantee that 
these potential distance matrices are geometrically realizable. Furthermore, for each strict distance class, our algorithm only found one potential distance matrix. It is possible that others may exist.  

\par
To check which of these matrices are geometrically realizable, we run Algorithm \ref{alg: check4}. A pseudocode of this algorithm can be found in Appendix \ref{app: algorithms}. Note that we assume $d_{1}=1$ in order to simplify the procedure.
\par 
Algorithm \ref{alg: check4} is an extended application of Theorem \ref{cor: realize} and Corrollary \ref{cor: realize v2}. The first step of this algorithm is to take the Cayley-Menger determinants of all $4$-point and $5$-point subsets of each potential distance matrix found by Algorithm \ref{alg:class} and set them equal to zero. Doing so yields a system of $\begin{pmatrix} n\\ 4 \end{pmatrix}$ equations with unknowns : $\{d_2,d_3,...,d_{n-1}\}$. If the configuration is realizable in the plane, solving this system of equations will give all possible solutions to these distances in $\mathbb{R}^{2}$. Note that the values must be positive and real-valued.
\par
For each of these solutions, we check the Cayley-Menger determinants of all $3$-point subsets of the configuration. If one or more of these determinants equals zero, that solution forces the configuration to place three points on the same line, violating general position. In such case, we discard it. If none of the determinants are zero, we keep the solution.
\par
For each remaining solution, we take the determinant of the Euclidean distance matrix of each $4$-point subset of the configuration. If any of these determinants equal zero, the solution forces four points onto the same circle, violating general position, and we throw it away. 
\par
Finally, we verify that Theorem \ref{thm: cayley n+1} is satisfied. To do this, we check the Cayley-Menger determinants of each 2-point and 3-point subset and verify that their determinants are equal to $(-1)^2$ an $(-1)^3$, respectively.
Any remaining solutions represent the distances of a geometrically realizable crescent configuration. 

\par
Applying this algorithm to the potential distance matrices returned by Algorithm \ref{alg:class} completes the proofs of Theorems \ref{thm:3on4} and \ref{thm:27on5}, as we find that there are at least three realizable crescent configurations on four points and 27 realizable crescent configurations on five points. Note that in the five point case,
this is a lower bound since we only tested one potential distance matrix corresponding to each distance class.  It is possible that the matrix we tested was not realizable, but a different matrix corresponding to the same distance class is realizable.  
For the four point case, we do not have this problem. Since every strict distance class was found to have an element that is geometrically realizable, we know that there are exactly three realizable strict classes of crescent configurations on four points.
\par
In Appendix \ref{app: 5points}, we provide a set of distances for every configuration on five points that had at least one remaining solution after applying this algorithm.

\section{Future Work}
\subsection{Improving the Algorithm}
As mentioned in \ref{intro}, the algorithm we introduce currently gets rid of any configuration containing Liu's legal parallelogram subgraph \cite{Liu}. This is due to the fact that though the parallelogram itself does not violate general position, it is isomorphic to a configuration that does. For configurations on four points, we were able to go back and recover such configuration, which is just the parallelogram itself. This improves the classification compared to Liu's result, and gives a procedure for how to obtain the most complete profile. Inevitably, adding such feature will certainly add a layer to the complexity of the program. It is thus an important open task to characterize configurations containing a legal parallelogram. \\
Moreover, we currently only sample one matrix from each strict distance class to test geometric realizability. As pointed out earlier in the paper, one distance class could generate adjacency matrices which corresponds to non-isomorphic configurations. Thus to obtain a sharper result, it is necessary to further divide the matrices generated by the same strict distance class into equivalence classes via isomorphism. Once this is achieved, we would be able to test one representative from each class to recover the lost configurations.
\subsection{Further Explorations in the Plane}\label{sub:further}
Thus far, we have used our techniques to classify crescent configurations in the plane for $n=4$ and $n=5$.  Because of the complexity of our algorithm, we have not been able to apply our techniques to higher $n$. As mentioned above, the runtime of our current algorithm is on the order of $n^n$, which  prevents us from carrying out this process for large $n$.  However, so far no configurations have been found for $n>8$, so even running a similar algorithm for $n=9$ would yield significant progress on this problem. Thus, we are interested in the possibility of modifying our algorithm, or finding a new technique that would allow us to count crescent configurations on higher $n$.  In this way, we could develop a sequence of $\{c_i\}$, where each $c_i$ gives the number of crescent configurations on $i$ points.  If Erd\H{o}s' conjecture is correct, then $\{c_i\}$ only has a finite number of non-zero terms. It would be interesting to see Erd\H{o}s' conjecture realized as a sequence that goes to zero.

Since our techniques yield one  crescent configuration for each possible distance class for a given $n$, we can use these to observe patterns.  For example, one can see from Figure \ref{fig:allFIVE} that many of crescent configurations on 5 points contain crescent configurations on 4 points.  We may be able to develop techniques using such patterns that generate some of the possible crescent configurations for larger $n$.  
\subsection{Extensions to Higher Dimensions}\label{sub:high}
As mentioned earlier, the distance geometry techniques that we use naturally extend to higher dimensions. Thus, we are interested in using these techniques to find the number of crescent configurations on $n$ points in a given dimension. Our goal is to construct a sequence  for each $d$ consisting of the number of crescent configurations on $i$ points in $\R^d$ for each $i$.  Currently, constructions in $\R^3$ have been found for 3, 4, and 5 points. Thus, even finding a single 6 point configuration in 3D would give new information.  We have attempted to use techniques from distance geometry to find a realization in $\R^3$ of a known distance set for $n=6$ in the plane.  However, the resulting systems of equations exceeded our computational resources.

Recently, Burt et. al \cite{SM15} found that given $d$ high enough, one can always construct a crescent configuration on $n$ points in $\R^d$. We can consider similar questions using the concept of distance coordinates. We are interested in determining whether given a distance set there always exists a dimension in which the set is geometrically realizable.  

\subsection{Properties of Crescent Configuration Types}\label{sub:epsilon}
Now that we have developed a way of classifying crescent configurations, we can examine certain properties for each of the types of crescent configurations. \\ 
One property that we started exploring but needs further work is the rigidity of crescent configurations. A graph is rigid when its vertices cannot be continuously moved to non-congruent positions while preserving all its distances. A more precise definition can be found in Asimov and Roth \cite{AsimowRoth}. A rigid graph could be redundantly rigid if any of its vertices could be removed and the remaining graph is rigid. While studying the result we presented in this paper, we learned that crescent configurations are all rigid owing to Laman's Theorem \cite{Laman} and the fact that they are complete graphs. However, we want to study further if, and for which value of $n$, one distance set could define two different realisations of crescent configurations belonging to the same distance class. The rigidity testing of these configurations would not only serve as a verification of our classification under distance class, but would also give us another way to characterize these crescent configurations. 

Another direction we are interested in is to develop a concept of stability for these configurations. This is due to our observation that moving the points of the M, R, and C- type configurations resulted in different levels of change in the distances. Further, should we define two distances to be equal if they are $\epsilon$ apart, then our combined study of the stability and rigidity of crescent configurations could have some powerful applications to the study of molecules.

\appendix 
\section{}\label{app: algorithms}
Below we include pseudocode for Algorithms \ref{alg:class} and \ref{alg: check4}.

\begin{pseudocode}{CrescentClassification}{distances,n}\label{alg:class}
\PROCEDURE{Permutations}{distances}
\DO data \GETS \ all \ permutations \ of \ distances\\
\RETURN data
\ENDPROCEDURE
\PROCEDURE{AdjacencyMatrix}{distances,n}
\FOREACH w \in \CALL{Permutations}{distances} \DO Thread \ the \ list \ through \ an \ n \times n \ matrix \ to \ create \ a \ symmetric \ matrix \ with \ diagonal \ $0$.
\ENDPROCEDURE
\PROCEDURE{ReOrder}{distances,n}
\FOREACH x \in \CALL{AdjacencyMatrix}{distances,n} \DO Sort \ the \ elements \ of \ each \ row \ into \ canonical \ order, \ then \ sort \ the \ rows \ by \ first \ element.
\ENDPROCEDURE
\PROCEDURE{RemoveDuplicates}{distances,n}
\FOREACH i \in \CALL{ReOrder}{distances,n}
\DO
\IF $i= 0$ \THEN Delete \ i
\ELSE Delete \ all \ duplicates \ of \ i, \ keeping \ only \ first \ instance.
\ENDPROCEDURE
\COMMENT Check \ all \ 4-point \ subsets \ for \ isosceles \ trapezoids\\
\PROCEDURE{SubMatrices}{distances, n}
\FOREACH y \in \CALL{RemoveDuplicates}{distances,n} 
\DO k \GETS Take \ all \  4 \times 4 \ submatrices \ taken \\ along \ the \ diagonal.\\
\RETURN k
\ENDPROCEDURE
\PROCEDURE{RemoveCyclic}{distances, n}
\FOREACH z \in \CALL{RemoveDuplicats}{distances,n}
\DO
\IF No \ i \in \CALL{SubMatrices}{distances,n} \ defines \ a \ cyclic \ quadrilateral \ or \ circle 
\THEN \RETURN z
\ELSE \RETURN 0
\ENDPROCEDURE
\PROCEDURE{RemoveLinearCase}{distances,n}
\FOREACH z \in \CALL{RemoveCyclic}{distances,n}
\DO
\IF \exists \ 3 \ or \ more \ isosceles \ triangles \ sharing \ the \ same \ base
\THEN \RETURN 0
\ELSE \RETURN z
\ENDPROCEDURE

\end{pseudocode}

\begin{pseudocode}
{CrescentClassificationContinued}{distances,n}
\PROCEDURE{FinalForm}{distances,n} 
\FOREACH j \in \CALL{RemoveDuplicates}{distances, n}  
\DO \RETURN {\CALL{AdjacencyMatrix}{distances, n}}.
\ENDPROCEDURE
\MAIN 
(x,y) \GETS (distances,n)\\
\OUTPUT{\CALL{FinalForm}{x,y}}
\ENDMAIN
\end{pseudocode}

\begin{pseudocode}{GeometryCheck}{distances, n} \label{alg: check4}
\PROCEDURE{SubMatrices}{distances, n}
\FOREACH y \in \CALL{CrescentClassification}{distances,n} 
\DO
k \GETS Take \ all \  4 \times 4 \ submatrices \ taken \\ along \ the \ diagonal.\\
\RETURN k
\ENDPROCEDURE
\PROCEDURE{CayleyMenger}{distances,n}
\FOREACH k \in \CALL{SubMatrices}{distances,n} 
\DO
S \GETS Take \ Cayley-Menger \ determinants \ of \ each \ 4 \times 4 \ and \ 5 \times \ 5 \ submatrix \ and \ set \\ equal \ to \ zero \\
\RETURN S
\ENDPROCEDURE
\COMMENT This \ generates \ the \ system \ of \ equations \ used \ to \ solve \ for  \ \{d_2,d_3,...,d_n\}. 
\\
\PROCEDURE{SolutionsCheck}{distances,n}
\FOREACH S \in \CALL{CayleyMenger}{distances,n}
\DO
\{d_2,d_3,...,d_n\} \GETS Solution \ for \ S.\\
\IF \{d_2,d_3,...,d_n\} \subset (0,\infty)  \THEN 
\RETURN {\CALL{CrescentClassification}{distances,n}}
\ELSE 
\RETURN {NULL}
\ENDPROCEDURE
\\
\PROCEDURE{TriangleInequality}{distances,n}
\FOREACH A \ \in \  {\CALL{SolutionsCheck}{distances,n}}
\DO
S \GETS Take \ Cayley-Menger \ determinants \ of 
 each \ 2 \times  2 \  submatrix \\
 T \GETS Take \ Cayley-Menger \ determinants \ of 
 each \ 3 \times  3 \  submatrix \\
\IF y \ \neq \ (-1)^{2} \ \in \ S \ || \ z \ \neq \ (-1)^3 \ \in \ T \THEN 
\RETURN {NULL}
\ELSE 
\RETURN A
\ENDPROCEDURE
\COMMENT This \ verifies \ that \ the \ triangle \ inequality \ is \ satisfied.
\PROCEDURE{LineCheck}{distances,n}
\FOREACH y \ \in \ \CALL{CayleyMenger}{distances,n}
\DO y \GETS Set \ of \ Cayley-Menger \ determinants \ of \ all \\ 3 \times 3 \ submatrices \ of \ y \ taken \ along \ the \ diagonal \\
\IF 0 \ in \ y \THEN \RETURN{NULL}
\ELSE \RETURN{\CALL{SolutionsCheck}{distances,n}}
\ENDPROCEDURE
\COMMENT By \ \ref{thm: cayley general}, \ the \ Cayley-Menger \ determinant \ of \ a \ 3\times3 \ matrix \\ will \ indicate \ if \ the \ three \ points \ lie \ on \ the \ same \ line.
\\
\end{pseudocode}
\\

\begin{pseudocode}{GeometryCheckContinued}{distances,n}

\PROCEDURE{EuclideanDistanceMatrix}{distances,n}
f \GETS x^{2}
\DO
\FOREACH y \in \CALL{LineCheck}{distances,n} k \GETS Map \ f \ to \ each \ element \ of \ y \\
\RETURN k
\ENDPROCEDURE

\COMMENT The \ next \ procedure \ is \ a \ direct \ application \ of \ Theorem \ \ref{thm: circles}.
\\
\PROCEDURE{CircleCheck}{distances,n}
\FOREACH k \ \in \ \CALL{EuclideanDistanceMatrix}{distances,n}
\DO
j \GETS the \ set \ of \ determinants \ of \ all \ 4\times4 \ submatrices \ taken \ along \ the \ diagonal \ of \ k \\
\IF 0 \ \in \ j
\THEN \RETURN{NULL}
\ELSE \RETURN {\CALL{SolutionsCheck}{distances,n}}
\ENDPROCEDURE
\MAIN 
(x,y) \GETS (distances,n)
\OUTPUT{{CircleCheck}{x,y}}
\ENDMAIN
\end{pseudocode}

\section{} \label{app: 5points}
Below we provide a list of adjacency matrices and distances for each configuration shown in Figures \ref{fig:mcr} and \ref{fig:allFIVE}. These represent members from all possible distance classes of configurations on four and five points. We say $d1=1$ for all configurations on five points.

\par
Note that most of these solutions are irrational and many of them have no \textit{nice} form (we consider $\frac{1}{\sqrt{2}}$ and $\sqrt{1-\sqrt{3}}$ to be \textit{nice} forms). In these cases, numerical values are provided up to four decimal places. Contact the author for a list of Mathematica outputs.
\begin{figure}[h]
\includegraphics[width=\linewidth]{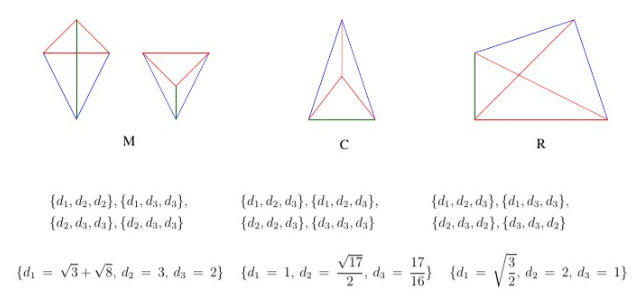}
\caption{Types M, C, and R with distance coordinates and values.}
\label{fig:mcrapp}
\end{figure}

\begin{longtable}[c]{| c | c | c |}
 \caption{Representative distance matrices for all 27 distance classes of crescent configurations on five points.\label{tab: matrixTABLE}}\\
 
\hline
\multicolumn{3}{|c|}{Distance Matrices for Five Points}\\
\hline

\endfirsthead 

\hline
\multicolumn{3}{|c|}{Continuation of Table \ref{tab: matrixTABLE}}\\
\hline
\endhead
 
\hline
\endfoot
 
\hline

\endlastfoot

$(1) \ \begin{pmatrix} 0&1&d_2&d_2&d_3\\1&0&d_3&d_4&d_4\\d_2&d_3&0&d_4&d_4\\d_2&d_4&d_4&0&d_4\\d_3&d_3&d_4&d_4&0 \end{pmatrix}$
\vspace{0.2cm}
&
$(2) \ \begin{pmatrix} 0&1&d_2&d_2&d_3\\1&0&d_3&d_4&d_4\\d_2&d_3&0&d_3&d_4\\d_2&d_4&d_3&0&d_4\\d_3&d_4&d_4&d_4&0 \end{pmatrix}$
&
$(3) \ \begin{pmatrix} 0&1&d_2&d_2&d_3\\1&0&d_4&d_4&d_4\\d_2&d_4&0&d_3&d_3\\d_2&d_4&d_3&0&d_4\\d_3&d_4&d_3&d_4&0 \end{pmatrix}$
\\

\hline

$(4) \ \begin{pmatrix} 0&1&d_2&d_2&d_4\\1&0&d_3&d_4&d_4\\d_2&d_3&0&d_3&d_3\\d_2&d_4&d_3&0&d_4\\d_4&d_4&d_3&d_4&0 \end{pmatrix}$
\vspace{0.2cm}
&
$(5) \ \begin{pmatrix} 0&1&d_2&d_3&d_3\\1&0&d_3&d_4&d_4\\d_2&d_3&0&d_2&d_4\\d_3&d_4&d_2&0&d_4\\d_3&d_4&d_4&d_4&0 \end{pmatrix}$
&
$(6) \ \begin{pmatrix} 0&1&d_2&d_3&d_3\\1&0&d_4&d_2&d_4\\d_2&d_4&0&d_2&d_3\\d_3&d_2&d_4&0&d_4\\d_3&d_4&d_3&d_4&0 \end{pmatrix}$
\\

\hline

$(7) \ \begin{pmatrix} 0&1&d_3&d_2&d_3\\1&0&d_2&d_4&d_4\\d_3&d_2&0&d_4&d_3\\d_2&d_4&d_4&0&d_4\\d_3&d_4&d_3&d_4&0 \end{pmatrix}$
\vspace{0.2cm}
&
$(8) \ \begin{pmatrix} 0&1&d_2&d_3&d_3\\1&0&d_4&d_4&d_4\\d_2&d_4&0&d_2&d_3\\d_3&d_4&d_2&0&d_4\\d_3&d_4&d_3&d_4&0 \end{pmatrix}$
&
$(9) \ \begin{pmatrix} 0&1&d_2&d_3&d_4\\1&0&d_2&d_4&d_4\\d_2&d_2&0&d_3&d_3\\d_3&d_4&d_3&0&d_4\\d_4&d_4&d_3&d_4&0 \end{pmatrix}$
\\

\hline

$(10) \ \begin{pmatrix} 0&1&d_2&d_3&d_4\\1&0&d_2&d_4&d_4\\d_2&d_2&0&d_3&d_4\\d_3&d_4&d_3&0&d_3\\d_4&d_4&d_4&d_3&0 \end{pmatrix}$
\vspace{0.2cm}
&
$(11) \ \begin{pmatrix} 0&1&d_2&d_4&d_3\\1&0&d_3&d_2&d_4\\d_2&d_3&0&d_4&d_3\\d_4&d_2&d_4&0&d_4\\d_3&d_4&d_3&d_4&0 \end{pmatrix}$
&
$(12) \ \begin{pmatrix} 0&1&d_2&d_3&d_4\\1&0&d_3&d_4&d_4\\d_2&d_3&0&d_2&d_3\\d_3&d_4&d_2&0&d_4\\d_4&d_4&d_3&d_4&0 \end{pmatrix}$
\\

\hline

$(13) \ \begin{pmatrix} 0&1&d_2&d_4&d_3\\1&0&d_3&d_4&d_4\\d_2&d_3&0&d_2&d_3\\d_4&d_4&d_2&0&d_4\\d_3&d_4&d_3&d_4&0 \end{pmatrix}$
\vspace{0.2cm}
&
$(14) \ \begin{pmatrix} 0&1&d_2&d_3&d_4\\1&0&d_3&d_4&d_4\\d_2&d_3&0&d_3&d_4\\d_3&d_4&d_3&0&d_2\\d_4&d_4&d_4&d_2&0 \end{pmatrix}$
&
$(15) \ \begin{pmatrix} 0&1&d_3&d_2&d_4\\1&0&d_2&d_4&d_4\\d_3&d_2&0&d_3&d_3\\d_2&d_4&d_3&0&d_4\\d_4&d_4&d_3&d_4&0 \end{pmatrix}$
\\

\hline

$(16) \ \begin{pmatrix} 0&1&d_2&d_3&d_4\\1&0&d_4&d_3&d_4\\d_2&d_4&0&d_2&d_4\\d_3&d_3&d_2&0&d_3\\d_4&d_4&d_4&d_3&0 \end{pmatrix}$
\vspace{0.2cm}
&
$(17) \ \begin{pmatrix} 0&1&d_2&d_3&d_4\\1&0&d_4&d_2&d_4\\d_2&d_4&0&d_3&d_3\\d_3&d_2&d_3&0&d_4\\d_4&d_4&d_3&d_4&0 \end{pmatrix}$
&
$(18) \ \begin{pmatrix} 0&1&d_2&d_4&d_3\\1&0&d_4&d_4&d_3\\d_2&d_4&0&d_2&d_3\\d_4&d_4&d_2&0&d_4\\d_3&d_3&d_3&d_4&0 \end{pmatrix}$
\\

\hline

$(19) \ \begin{pmatrix} 0&1&d_2&d_4&d_3\\1&0&d_4&d_4&d_3\\d_2&d_4&0&d_2&d_4\\d_4&d_4&d_2&0&d_3\\d_3&d_3&d_4&d_3&0 \end{pmatrix}$
\vspace{0.2cm}
&
$(20) \ \begin{pmatrix} 0&1&d_2&d_3&d_4\\1&0&d_4&d_4&d_4\\d_2&d_4&0&d_2&d_3\\d_3&d_4&d_2&0&d_3\\d_4&d_4&d_3&d_3&0 \end{pmatrix}$
&
$(21) \ \begin{pmatrix} 0&1&d_2&d_4&d_4\\1&0&d_3&d_3&d_4\\d_2&d_3&0&d_2&d_3\\d_4&d_3&d_2&0&d_4\\d_4&d_4&d_3&d_4&0 \end{pmatrix}$
\\

\hline

$(22) \ \begin{pmatrix} 0&1&d_2&d_4&d_4\\1&0&d_3&d_3&d_4\\d_2&d_3&0&d_2&d_4\\d_4&d_3&d_2&0&d_3\\d_4&d_4&d_4&d_3&0 \end{pmatrix}$
\vspace{0.2cm}
&
$(23) \ \begin{pmatrix} 0&1&d_2&d_4&d_4\\1&0&d_4&d_3&d_4\\d_2&d_4&0&d_3&d_3\\d_4&d_3&d_3&0&d_2\\d_4&d_4&d_3&d_2&0 \end{pmatrix}$
&
$(24) \ \begin{pmatrix} 0&1&d_3&d_3&d_4\\1&0&d_3&d_4&d_4\\d_3&d_3&0&d_2&d_2\\d_3&d_4&d_2&0&d_4\\d_4&d_4&d_2&d_4&0 \end{pmatrix}$
\\

\hline

$(25) \ \begin{pmatrix} 0&1&d_3&d_3&d_4\\1&0&d_4&d_3&d_4\\d_3&d_4&0&d_2&d_2\\d_3&d_3&d_2&0&d_4\\d_4&d_4&d_2&d_4&0 \end{pmatrix}$
\vspace{0.2cm}
&
$(26) \ \begin{pmatrix} 0&1&d_4&d_3&d_3\\1&0&d_4&d_3&d_4\\d_4&d_4&0&d_2&d_2\\d_3&d_3&d_2&0&d_4\\d_3&d_4&d_2&d_4&0 \end{pmatrix}$
&
$(27) \ \begin{pmatrix} 0&1&d_2&d_4&d_4\\1&0&d_4&d_3&d_4\\d_2&d_4&0&d_2&d_3\\d_4&d_3&d_2&0&d_3\\d_4&d_4&d_3&d_3&0 \end{pmatrix}$
\\
\hline
\end{longtable}

\begin{longtable}[c]{| c | c |}
 \caption{Realizable distances for each matrix in Table \ref{tab: matrixTABLE}.\label{tab: distanceTABLE}}\\
 
\hline
\multicolumn{2}{|c|}{Realizable Distances for Five Points}\\
\hline

\endfirsthead 

\hline
\multicolumn{2}{|c|}{Continuation of Table \ref{tab: distanceTABLE}}\\
\hline
\endhead
 
\hline
\endfoot
 
\hline

\endlastfoot
(1)&$ \ \{d_2 \to \sqrt{\frac{6-3\sqrt{2}-\sqrt{6(3-2\sqrt{2})}}{2(2-\sqrt{2})}}, \ d_3 \to \sqrt{\frac{2-\sqrt{2}}{2}}, \ d_4 \to \sqrt{\frac{4-2\sqrt{2}-\sqrt{6(3-2\sqrt{2})}}{2}}\}$
\\
\hline
(2)&$ \ \{d_2 \to \sqrt{\frac{4-\sqrt{3}}{2}}, \ d_3 \to \sqrt{\frac{2-\sqrt{3}}{2}}, \ d_4 \to \frac{1}{\sqrt{2}}\}$
\\
\hline
(3)&$ \ \{d_2 \to \sqrt{\frac{1+\sqrt{3}}{2}}, \ d_3 \to \frac{1}{2}(-1 + \sqrt{3+2\sqrt{3}}), \ d_4 \to \frac{1}{2}\sqrt{2-2\sqrt{-3+2\sqrt{3}}}\}$
\\
\hline
(4)&$ \ \{d_2 \to 1.2091, \ d_3 \to 0.5028, \ d_4 \to 0.8135 \}$\\
\hline
(5)&$ \ \{d_2 \to \frac{1}{2}\sqrt{\frac{13+\sqrt{73}}{2}}, \ d_3 \to \frac{1}{2}\sqrt{\frac{23+3\sqrt{73}}{2}}, \ d_4 \to \frac{1}{2}\sqrt{\frac{9+\sqrt{73}}{2}} \}$\\
\hline
(6)&$ \ \{d_2 \to \frac{1}{\sqrt{2}}, \ d_3 \to \sqrt{\frac{3+\sqrt{6}}{6}}, \ d_4 \to \sqrt{\frac{3}{2}+\sqrt{\frac{3}{2}}}\}$
\\
\hline
(7)&$ \ \{d_2 \to \frac{1}{\sqrt{3}}, \ d_3 \to \sqrt{\frac{2}{3}}, \ d_4 \to \sqrt{1+\sqrt{\frac{2}{3}}}\}$\\
\hline
(10)&$ \ \{d_2 \to 0.2757, \ d_3 \to 0.5107, \ d_4 \to 0.7621\}$\\
\hline
(9)&$ \ \{d_2 \to \sqrt{2-\sqrt{3}}, \ d_3 \to \sqrt{\frac{2-\sqrt{3}}{2}}, \ d_4 \to \frac{1}{\sqrt{2}}\}$
\\
\hline
(10)&$ \ \{d_2 \to \sqrt{\frac{2(4-\sqrt{13})}{-1+\sqrt{13}}}, \ d_3 \to \sqrt{\frac{4-\sqrt{13}}{3}}, \ d_4 \to \sqrt{\frac{-1+\sqrt{13}}{6}}\}$\\
\hline
(11)& $\ \{d_2 \to \sqrt{\frac{-35 +19\sqrt{13}}{17(9-\sqrt{13})}}, \ d_3 \to \sqrt{\frac{9-\sqrt{13}}{34}}, \ d_4 \to \sqrt{\frac{9-\sqrt{13}}{34}}\}$\\
\hline
(12)&$ \ \{d_2 \to \sqrt{\frac{1+\sqrt{3}}{2}}, \ d_3 \to \frac{-1+\sqrt{3+2\sqrt{3}}}{2}, \ d_4 \to \frac{1}{\sqrt{2+3^{\frac{1}{4}}\sqrt{2}}}\}$
\\
\hline
(13)&$ \ \{d_2 \to 0.3383, \ d_3 \to 0.8135, \ d_4 \to 0.5028\}$\\
\hline
(14)&$ \ \{d_2 \to \sqrt{8-3\sqrt{7}}, \ d_3 \to \sqrt{\frac{2(45-17\sqrt{7})}{8-3\sqrt{7}}}, \ d_4 \to \sqrt{3-\sqrt{7}}\}$\\
\hline
(15)&$ \ \{d_2 \to 1.9696, \ d_3 \to 1.5321, \ d_4 \to 2.8794\}$
\\
\hline
(16)&$ \ \{d_2 \to 0.7597, \ d_3 \to 1.2293, \ d_4 \to 0.5112\}$\\
\hline
(17)&$ \ \{d_2 \to \sqrt{4+\sqrt{13}}, \ d_3 \to \frac{1}{2}(3+\sqrt{13}), \ d_4 \to \sqrt{\frac{1}{2}(3+\sqrt{13})}\}$\\
\hline
(18)&$ \ \{d_2 \to 0.3976, \ d_3 \to 0.5304, \ d_4 \to 0.7944\}$
\\
\hline
(19)&$ \ \{d_2 \to 1.0879, \ d_3 \to 0.5154, \ d_4 \to 0.6344\}$\\
\hline
(20)&$ \ \{d_2 \to 1.3275, \ d_3 \to 2.0277, \ d_4 \to 1.0730\}$\\
\hline
(21)&$ \ \{d_2 \to 1.1578, \ d_3 \to 0.9345, \ d_4 \to 1.8686\}$
\\
\hline
(22)&$ \ \{d_2 \to 1.1561, \ d_3 \to 0.6707, \ d_4 \to 0.5801\}$\\
\hline
(23)&$ \ \{d_2 \to \sqrt{8-3\sqrt{7}}, \ d_3 \to \sqrt{\frac{45-17\sqrt{7}}{8-3\sqrt{7}}}, \ d_4 \to \sqrt{2\frac{45-17\sqrt{7}}{8-3\sqrt{7}}}\}$\\
\hline
(24)&$ \ \{d_2 \to 0.3107, \ d_3 \to 0.5028, \ d_4 \to 0.6180\}$
\\
\hline
(25)&$ \ \{d_2 \to \frac{\sqrt{4-\sqrt{7}}}{3}, \ d_3 \to \frac{1}{3}\sqrt{\frac{13-\sqrt{7}}{4-\sqrt{7}}}, \ d_4 \to \frac{1}{3}(-1+\sqrt{7})\}$\\
\hline
(26)&$ \ \{d_2 \to 0.6599, \ d_3 \to 1.3930, \ d_4 \to 0.8124\}$\\
\hline
(27)&$ \ \{d_2 \to \sqrt{2}, \ d_3 \to \sqrt{2(3-\sqrt{7})}, \ d_4 \to \sqrt{3-\sqrt{7}}\}$
\\
\hline
\end{longtable}

\ \\   
\pagebreak
\end{document}